\newtheorem{proposition}{Proposition}[section]
\newtheorem{theorem}[proposition]{Theorem}
\theoremstyle{definition}
\newtheorem{definition}[proposition]{Definition}
\newtheorem{examples}[proposition]{Examples}
\newtheorem{remark}[proposition]{Remark}
\newcommand{\thlabel}[1]{\label{th:#1}}
\newcommand{\thref}[1]{Theorem~\ref{th:#1}}
\newcommand{\selabel}[1]{\label{se:#1}}
\newcommand{\seref}[1]{Section~\ref{se:#1}}
\newcommand{\exlabel}[1]{\label{ex:#1}}
\newcommand{\exref}[1]{Example~\ref{ex:#1}}
\newcommand{\delabel}[1]{\label{de:#1}}
\newcommand{\deref}[1]{Definition~\ref{de:#1}}
\newcommand{\eqlabel}[1]{\label{eq:#1}}
\newcommand{\equref}[1]{(\ref{eq:#1})}
\def\NN{{\mathbb N}}
\def\CC{{\mathbb C}}
\newcommand{\Cc}{\mathcal{C}}
\newcommand{\Mm}{\mathcal{M}}
\def\*C{{}^*\hspace*{-1pt}{\Cc}}
\def\text#1{{\rm {\rm #1}}}
\begin{document}
\title[Classifying complements for associative algebras]
{Classifying complements for associative algebras}

\author{A. L. Agore}
\address{Faculty of Engineering, Vrije Universiteit Brussel, Pleinlaan 2, B-1050 Brussels, Belgium
\textbf{and} Department of Applied Mathematics, Bucharest
University of Economic Studies, Piata Romana 6, RO-010374
Bucharest 1, Romania} \email{ana.agore@vub.ac.be and
ana.agore@gmail.com}

\thanks{The author is supported by an \emph{Aspirant} Fellowship from the Fund for Scientific
Research-Flanders (Belgium) (F.W.O. Vlaanderen). This research is
part of the grant no. 88/05.10.2011 of the Romanian National
Authority for Scientific Research, CNCS-UEFISCDI}

\subjclass[2010]{16D70, 16Z05, 16E40} \keywords{Classifying
complements problem, Matched pair of algebras}


\maketitle

\begin{abstract}
For a given extension $A \subset E$ of associative algebras we
describe and classify up to an isomorphism all $A$-complements of
$E$, i.e. all subalgebras $X$ of $E$ such that $E = A + X$ and $A
\cap X = \{0\}$. Let $X$ be a given complement and $(A, \, X, \,
\triangleright, \triangleleft, \leftharpoonup, \rightharpoonup
\bigl)$ the canonical matched pair associated with the
factorization $E = A + X$. We introduce a new type of deformation
of the algebra $X$ by means of the given matched pair and prove
that all $A$-complements of $E$ are isomorphic to such a
deformation of $X$. Several explicit examples involving the matrix
algebra are provided.
\end{abstract}

\section*{Introduction}

The concept of a \emph{matched pair} first appeared in the group
theory setting (\cite{Takeuchi}). Since then, the corresponding
concepts were introduced for several other categories such as Lie
algebras (\cite{Majid}), Hopf algebras (\cite{Majid2}), groupoids
(\cite{Andr}), Leibniz algebras (\cite{am-2013b}), locally compact
quantum groups (\cite{VV}), etc. With any matched pair of groups
(resp. Lie algebras, Hopf algebras, etc.) we can associate a new
group (resp. Lie algebra, Hopf algebra, etc.) called the
\emph{bicrossed product}. The bicrossed product construction is
responsible for the so-called \emph{factorization problem}, which
asks for the description and classification of all objects $E$
(groups, Lie algebras, Hopf algebras, etc.) which can be written
as a 'product' of two subobjects $A$ and $X$ having 'minimal
intersection' in $E$ - we refer to \cite{abm1} for more details, a
historical background and additional references. In the setting of
associative algebras, the bicrossed product was recently
introduced in \cite{am-2013c} as a special case of the more
general unified product. However, in this paper we use a slightly
more general construction than the one from \cite{am-2013c},
leaving aside the unitary condition on the algebras.

The classifying complements problem (CCP) was introduced in
\cite{am-2013a} in a very general, categorical setting, as a sort
of converse of the factorization problem. A similar problem,
called invariance under twisting, was studied in \cite{panaite2}
for Brzezinski's crossed products. In this paper we deal with the
(CCP) in the context of associative algebras:

\textbf{Classifying complements problem (CCP):} \textit{Let $A
\subset E$ be a given subalgebra of $E$. If an $A$-complement of
$E$ exists, describe explicitly, classify all $A$-complements of
$E$ and compute the cardinal of the (possibly empty) isomorphism
classes of all A-complements of E (which will be called the
factorization index $[E: A]^f$ of $A$ in $E$).}

Another related problem which will not be discussed in this paper
is that concerning the existence of complements whose natural
approach is the computational one. In the sequel the existence of
a complement will, however, be a priori assumed and we will be
interested in describing all complements of an algebra extension
$A \subset E$ in terms of one given complement.

The paper is organized as follows. In \seref{prel} we recall the
bicrossed product for associative algebras introduced in
\cite{am-2013c}. However, the construction used in this paper is
slightly more general as we drop the unitary assumption on the
algebras. \seref{2} contains the main results of the paper which
provide the complete answer to the (CCP) for associative algebras.
Let $A \subset E$ be a given extension of algebras. If $X$ is a
given $A$-complement of $E$ then \thref{descriereCompAlg} provides
the description of all complements of $A$ in $E$: any
$A$-complement of $E$ is isomorphic to an $r$-deformation of $X$,
as defined by \equref{rAlgdef}. In other words, exactly as in the
case of Hopf algebras, Lie algebras or Leibniz algebras, given $X$
an $A$-complement of $E$ all the other $A$-complements of $E$ are
deformations of the algebra $X$ by certain maps $r: X \to A$
associated with the canonical matched pair which arises from the
factorization $E = A + X$. The theoretical answer to the (CCP) is
given in \thref{clasformelorAlg} where we explicitly construct a
cohomological type object ${\mathcal H}{\mathcal A}^{2} (X, A \, |
\, (\triangleright, \triangleleft, \leftharpoonup,
\rightharpoonup) )$ which parameterizes all $A$-complements of
$E$. We introduce the factorization index $[E: A]^f$ of a given
extension $A \subset E$ as the cardinal of the (possibly empty)
isomorphism classes of all A-complements. Moreover, we prove that
the factorization index is computed by the formula: $[E: A]^f =
|\, {\mathcal H}{\mathcal A}^{2} (X, A \, | \, (\triangleright,
\triangleleft, \leftharpoonup, \rightharpoonup) ) \, |$. Several
explicit examples are provided. More precisely, we indicate
associative algebra extensions whose factorization index is $1$,
$2$ or $3$. We end the paper with an extension of index at least
$4$.

\section{Preliminaries}\selabel{prel}
Unless otherwise stated, all vector spaces, linear or bilinear
maps are over an arbitrary field $K$ of characteristic zero. A map
$f: V \to W$ between two vector spaces is called the trivial map
if $f (v) = 0$, for all $v\in V$. By an algebra $A$ we mean an
associative, not necessarily unital algebra over $K$. The concept
of left/right $A$-module or $A$-bimodule is defined as in the case
of unital algebras except of course for the unitary condition.
${}_A\Mm_A$ stands for the category of all $A$-bimodules, i.e.
triples $(V, \, \curvearrowright, \, \curvearrowleft)$ consisting
of a vector space $V$ and two bilinear maps $\curvearrowright \, :
A \times V \to V$, $\curvearrowleft: V \times A \to V$ such that
$(V, \curvearrowright)$ is a left $A$-module, $(V,
\curvearrowleft)$ is a right $A$-module and $a \curvearrowright (x
\curvearrowleft b) = (a \curvearrowright x) \curvearrowleft b$,
for all $a$, $b\in A$ and $x\in V$. \\ Let $A \subseteq E$ be a
subalgebra. Another subalgebra $X$ of $E$ is called an
\emph{$A$-complement of $E$} if $E = A + X$ and $A \cap X =
\{0\}$. For an arbitrary integer $n \geq 2$ let
$\mathcal{M}_{n}(K)$ be the algebra of $n \times n$ matrices over
the field $K$. We denote by $e_{i\,j} \in \mathcal{M}_{n}(K)$ the
matrix having $1$ in the $(i,j)^{th}$ position and zeros
elsewhere.

\subsection*{Bicrossed products revisited}
We recall the construction of the bicrossed product for
associative algebras as defined in \cite{am-2013c} but rephrased
into the present setting. More precisely, working with associative
not necessarily unital algebras will result in dropping the
normalization assumption on the matched pair.

\begin{definition} \delabel{mpalgebras}
A \emph{matched pair} of algebras is a system $(A, \, X,
\triangleright, \triangleleft, \leftharpoonup, \rightharpoonup
\bigl)$ consisting of two algebras $A$, $X$ and four bilinear maps
$$
\triangleleft : X \times A \to X, \quad \triangleright : X \times
A \to A, \quad \leftharpoonup \, : A \times X \to A, \quad
\rightharpoonup \, : A \times X \to X
$$
such that $(X, \rightharpoonup, \triangleleft) \in {}_A\Mm_A$ is
an $A$-bimodule, $(A, \triangleright, \leftharpoonup) \in
{}_X\Mm_X$ is an $X$-bimodule and the following compatibilities
hold for any $a$, $b \in A$, $x$, $y \in X$:
\begin{enumerate}
\item[(MP1)] $a \rightharpoonup (x\, y) = (a \rightharpoonup x) \,
y + (a \leftharpoonup x) \rightharpoonup y$;

\item[(MP2)] $ (a\,b) \leftharpoonup x = a\, (b \leftharpoonup x)
+ a \leftharpoonup (b \rightharpoonup x)$;

\item[(MP3)] $ x \triangleright (a\,b) = (x \triangleright a)\, b
+ (x \triangleleft a) \triangleright b$;

\item[(MP4)] $(x\, y) \triangleleft a = x \triangleleft (y
\triangleright  a) + x \,(y \triangleleft a)$;

\item[(MP5)] $a \,(x \triangleright b) + a \leftharpoonup (x
\triangleleft b) = (a \leftharpoonup x) \, b + (a\rightharpoonup
x) \triangleright b$;

\item[(MP6)] $x \triangleleft (a \leftharpoonup y) + x \, (a
\rightharpoonup y) = (x \triangleright a) \rightharpoonup y + (x
\triangleleft a) \, y $;
\end{enumerate}
\end{definition}

Let $(A, \, X, \triangleright, \triangleleft, \leftharpoonup,
\rightharpoonup \bigl)$ be a matched pair of algebras. Then, $A
\bowtie X = A \times X$, as a vector space, with the bilinear map
defined by
\begin{equation}\eqlabel{defbicr}
(a, \, x) \bullet (b, \, y) := \bigl( ab + a \leftharpoonup y + x
\triangleright b, \,\, a \rightharpoonup y  + x\triangleleft b +
x\, y \bigl)
\end{equation}
for all $a$, $b \in A$ and $x$, $y \in X$ is an associative
algebra called the \emph{bicrossed product} associated with the
matched pair $(A, \, X, \triangleright, \triangleleft,
\leftharpoonup, \rightharpoonup \bigl)$. As we will see in the
following examples, matched pairs of algebras appear quite
naturally from minimal sets of data.

\begin{examples}

$1)$ Let $A$ be an algebra and $(X, \rightharpoonup,
\triangleleft) \in {}_A\Mm_A$ an $A$-bimodule. We see $X$ as an
algebra with the trivial multiplication, i.e. $xy = 0$ for all
$x$, $y \in X$. It is straightforward to see that $(A, \, X, \,
\triangleleft, \, \triangleright_{0}, \, \leftharpoonup_{0}, \,
\rightharpoonup \bigl)$ is a matched pair of algebras, where
$\triangleright_{0}$ and $\leftharpoonup_{0}$ are the trivial
actions. The multiplication on the corresponding bicrossed product
$A \bowtie X$ is given as follows:
\begin{equation}
(a, \, x) \bullet (b, \, y) := \bigl( ab , \,\, a \rightharpoonup
y  + x\triangleleft b \bigl)
\end{equation}
The above bicrossed product is precisely the \emph{trivial
extension} of $A$ by the $A$-bimodule $X$.

$2)$ The previous example can be slightly generalized by
considering $A$ and $X$ to be both algebras such that $(X,
\rightharpoonup, \triangleleft) \in {}_A\Mm_A$ is an $A$-bimodule
for which the following compatibilities hold
\begin{equation}\eqlabel{semidirect}
a \rightharpoonup (x \, y) = (a \rightharpoonup x) \, y, \quad (x
\, y) \triangleleft a = x \, (y \triangleleft a), \quad x \, (a
\rightharpoonup y) = (x \triangleleft a) \, y
\end{equation}
for all $a\in A$, $x$, $y\in X$. In \cite[Definition, pg.
212]{pierce} a bimodule $X$ satisfying \equref{semidirect} is
called a \emph{multiplicative $A$-bimodule}. Then, the bicrossed
product associated with the matched pair $(A, \, X, \,
\triangleleft, \, \triangleright_{0}, \, \leftharpoonup_{0}, \,
\rightharpoonup \bigl)$, where $\triangleright_{0}$,
$\leftharpoonup_{0}$ are the trivial actions will be called,
following \cite[pg. 20]{am-2013c}, a \emph{semidirect product of
$A$ and $X$}. The multiplication on the corresponding bicrossed
product $A \bowtie X$ is given as follows:
\begin{eqnarray*}
(a, \, x) \bullet (b, \, y) := \bigl( ab , \,\, a \rightharpoonup
y  + x\triangleleft b + xy \bigl)
\end{eqnarray*}
This construction originates in \cite[Lemma a]{pierce} where is
presented in a different form.
\end{examples}


The bicrossed product of two algebras is the construction
responsible for the so-called \emph{factorization problem}, which
in the case of associative algebras comes down to:

\emph{Let $A$ and $X$ be two algebras. Describe and classify all
algebras $E$ that factorize through $A$ and $X$, i.e. $E$ contains
$A$ and $X$ as subalgebras such that $E = A + X$ and $A \cap X =
\{0\}$.}

Recall from \cite[Corollary 3.7]{am-2013c} that an algebra $E$
factorizes through two subalgebras $A$ and $X$ if and only if
there exists a matched pair of algebras $(A, \, X, \triangleright,
\triangleleft, \leftharpoonup, \rightharpoonup \bigl)$ such that $
E \cong A \bowtie X$. More precisely, if $E$ factorizes through
$A$ and $X$ we can construct a matched pair of algebras as
follows:
\begin{equation}\eqlabel{mpcanonic}
x \triangleright a + x \triangleleft a : = x \, a, \qquad a
\leftharpoonup x + a \rightharpoonup x := a \, x
\end{equation}
for all $a \in A$, $x \in X$. Throughout, the above matched pair
will be called \emph{the canonical matched pair} associated with
the factorization of $E$ through $A$ and $X$. Besides the
factorizable algebras mentioned above, several other classes of
associative algebras were studied recently: for instance, in
\cite{KO}, all complex finite-dimensional algebras of level one
are described.

\begin{examples}\exlabel{matrix}
$1)$ Let $n \in \NN$, $n \geq 2$. It can be easily seen that
$\mathcal{M}_{n}(K)$ factorizes through the subalgebra of strictly
lower triangular matrices $A = \{(a_{i\, j})_{i, \, j =
\overline{1,n}} ~|~ a_{i\, j} = 0 \,\, {\rm for} \,\, i \leq j \}$
and the subalgebra of upper triangular matrices $X = \{(x_{i\,
j})_{i, \, j = \overline{1,n}} ~|~ x_{i\, j} = 0 \,\, {\rm for}
\,\, i > j \}$. We denote by $\mathcal{B}_{A}:= \{e_{i\, j} ~|~
i,\, j \in \overline{1, n}, \, i > j\}$ and $\mathcal{B}_{X}:=
\{e_{i\, j} ~|~ i,\, j \in \overline{1, n}, \, i \leq j\}$ the
$K$-basis of $A$, respectively $X$. Then, the canonical matched
pair associated with this factorization is given as follows:
\begin{eqnarray*}
e_{i \, j} \leftharpoonup e_{l \, k} = \left\{
\begin{array}{rl} e_{i\, k}, & \mbox{if}\,\,\, i > k \geq j = l\\
0, & \mbox{otherwise}
 \end{array}\right.,
\qquad e_{i \, j} \rightharpoonup e_{l \, k} = \left\{
\begin{array}{rl} e_{i\, k}, & \mbox{if}\,\,\, l = j < i \leq k \\
0, & \mbox{otherwise}
 \end{array}\right.
\end{eqnarray*}

\begin{eqnarray*}
e_{r \, s} \triangleright e_{p \, t} = \left\{
\begin{array}{rl} e_{r\, t}, & \mbox{if}\,\,\, t < r \leq s = p\\
0, & \mbox{otherwise}
 \end{array}\right.,
\qquad e_{r \, s} \triangleleft e_{p \, t} = \left\{
\begin{array}{rl} e_{r\, t}, & \mbox{if}\,\,\, r \leq t < s = p \\
0, & \mbox{otherwise}
 \end{array}\right.
\end{eqnarray*}

$2)$ Consider $n \in \NN$, $n \geq 2$. Then $\mathcal{M}_{n}(K)$
factorizes also through the subalgebras $A = \{(a_{i\, j})_{i, \,
j = \overline{1,n}} ~|~ a_{n\, u} = 0 \,\, {\rm for} \,\, {\rm
all} \,\, u = \overline{1,n} \}$ and $X = \{(x_{i\, j})_{i, \, j =
\overline{1,n}} ~|~ x_{k\, l} = 0 \,\, {\rm for} \,\, {\rm all}
\,\, k = \overline{1, n-1} \,\, {\rm and} \,\, l = \overline{1, n}
\}$. We denote by $\mathcal{B}_{A}:= \{e_{i\, j} ~|~ i =
\overline{1,\, n-1}, \,\, j = \overline{1, n} \}$ and
$\mathcal{B}_{X}:= \{e_{n\, j} ~|~ j = \overline{1,\, n}\}$ the
$K$-basis of $A$, respectively $X$. The canonical matched pair
associated with this factorization is given as follows:
\begin{eqnarray*}
e_{n \, u} \triangleleft  e_{v \, t} = \left\{
\begin{array}{rl} e_{n\, t}, & \mbox{if}\,\,\, u = v\\
0, & \mbox{otherwise}
 \end{array}\right.,
\qquad e_{v \, t} \leftharpoonup e_{n \, u} = \left\{
\begin{array}{rl} e_{v\, u}, & \mbox{if}\,\,\, t = n \\
0, & \mbox{otherwise}
 \end{array}\right.
\end{eqnarray*}
while the other two actions are both trivial.

$3)$ Let $R$, $S$ be $K$-algebras and $M \in
{}_{R}\mathcal{M}_{S}$. Then the algebra $E = \begin{pmatrix} R & M\\
0 & S \end{pmatrix}$ factorizes through the subalgebras $A :=
\begin{pmatrix} R & 0 \\ 0 & S \end{pmatrix}$ and $X = \begin{pmatrix} 0 & M \\ 0 & 0
\end{pmatrix}$. More precisely, the associated matched pair is
given as follows for all $r \in R$, $s \in S$, $m \in M$:
\begin{eqnarray*}
\begin{pmatrix} 0 & m \\ 0 & 0 \end{pmatrix} \triangleleft \begin{pmatrix} r & 0 \\ 0 & s
\end{pmatrix} = \begin{pmatrix} 0 & ms \\ 0 & 0 \end{pmatrix} , \qquad
\begin{pmatrix} r & 0 \\ 0 & s \end{pmatrix} \,\, \rightharpoonup
\,\, \begin{pmatrix} 0 & m \\ 0 & 0 \end{pmatrix} =
\begin{pmatrix} 0 & rm \\ 0 & 0 \end{pmatrix}
\end{eqnarray*}
while the other two actions are both trivial.
\end{examples}

\section{Classifying complements. Applications}\selabel{2}

In this section we prove the main result of this paper which
answers the (CCP) for algebras. First we need to introduce the
following concept:

\begin{definition}
Let $(A, \, X, \triangleright, \triangleleft, \leftharpoonup,
\rightharpoonup \bigl)$ be a matched pair of algebras. A linear
map $r: X \to A$ is called a \emph{deformation map} of the matched
pair $(A, \, X, \triangleright, \triangleleft, \leftharpoonup,
\rightharpoonup \bigl)$ if the following compatibility holds for
all $x$, $y \in X$:
\begin{equation}\eqlabel{factAlg}
r(x) \, r(y) - r(x \, y) = r \bigl(r(x) \rightharpoonup y + x \lhd
r(y)\bigl) - r(x) \leftharpoonup y - x \rhd r(y)
\end{equation}
\end{definition}

We denote by ${\mathcal D}{\mathcal M} \, (A, X \, | \,
(\triangleright, \triangleleft, \leftharpoonup, \,
\rightharpoonup) )$ the set of all deformation maps of the matched
pair $(A, X, \triangleright, \triangleleft, \leftharpoonup,
\rightharpoonup)$. The trivial map $r: X \to A$, $r(x) = 0$, for
all $x\in X$ is of course a deformation map. The right hand side
of \equref{factAlg} measures how far $r: X\to A$ is from being an
algebra map.

The next example shows that computing all deformation maps
associated with a given matched pair is a highly non-trivial
problem.

\begin{examples}
Consider $\mathcal{M}_{n}(K)$ with the factorization given in
\exref{matrix} $1)$. Then ${\mathcal D}{\mathcal M} \, (A, X \, |
\, (\triangleright, \triangleleft, \leftharpoonup, \,
\rightharpoonup) )$ is in bijection with the families of scalars
$\{\bigl(\alpha^{ab}_{cd}\bigl)_{a, b, c, d \in \overline{1,n}}
~|~ \alpha^{ab}_{cd}\in K, \, c \leq d, \, a > b \}$ subject to
the compatibility condition:
\begin{eqnarray*}
\sum_{q < t < k}\, \alpha^{kt}_{ij} \, \alpha^{tq}_{rs} =
\delta_{jr} \, \alpha^{kq}_{is} + \sum_{r < u \leq s} \,
\alpha^{ur}_{ij} \, \alpha^{kq}_{us} \, + \, \sum_{i \leq v < j}
\, \alpha^{jv}_{rs} \, \alpha^{kq}_{iv} \, - \alpha^{kr}_{ij} \,
\delta_{sq} \, - \, \alpha^{jq}_{rs} \, \delta_{ki}
\end{eqnarray*}
for all $k > q$, $i \leq j$ and $r \leq s$, where $\delta_{jr}$ is
the Kroneker symbol. The bijection is such that any deformation
map $r: X \to A$ is implemented from a family of scalars by the
following formula:
$$
r(e_{ij}) = \sum_{k > t} \, \alpha^{kt}_{ij}\, e_{kt}, \quad {\rm
for} \quad {\rm all} \quad i \leq j
$$
\end{examples}

The following theorem is the key result in solving the (CCP) for
associative algebras:

\begin{theorem}\thlabel{descriereCompAlg}
Let $A$ be a subalgebra of $E$ and $X$ a given $A$-complement of
$E$ with the associated canonical matched pair $(A, X,
\triangleright, \triangleleft, \leftharpoonup, \rightharpoonup)$.

$(1)$ Let $r: X \to A$ be a deformation map of the above matched
pair. Then $X_{r} := X$, as a vector space, with the new
multiplication defined for any $x$, $y \in X$ by:
\begin{equation}\eqlabel{rAlgdef}
x \cdot_{r} y := x\,y + r(x) \rightharpoonup y + x \lhd r(y)
\end{equation}
is an associative algebra called the $r$-deformation of $X$.
Furthermore, $X_{r}$ is an $A$-complement of $E$.

$(2)$ $\overline{X}$ is an $A$-complement of $E$ if and only if
there exists an isomorphism of algebras $\overline{X} \cong
X_{r}$, for some deformation map $r: X \to A$ of the matched pair
$(A, X, \triangleright, \triangleleft, \leftharpoonup,
\rightharpoonup)$.
\end{theorem}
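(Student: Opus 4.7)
The key idea is to identify $E$ with the bicrossed product $A \bowtie X$ arising from the canonical matched pair, and then to realize $A$-complements of $E$ as images of linear sections of the projection $\pi_X : A \bowtie X \to X$. For a linear map $r : X \to A$, I introduce the map $\sigma_r : X \to E$ given by $\sigma_r(x) = (r(x), x)$, and set $\widetilde{X}_r := \sigma_r(X)$. The complement properties $A \cap \widetilde{X}_r = \{0\}$ and $E = A + \widetilde{X}_r$ then fall out immediately from the decomposition $(a, x) = (a - r(x), 0) + (r(x), x)$.

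For part $(1)$, I plan to compute directly from \equref{defbicr}:
\begin{equation*}
\sigma_r(x) \bullet \sigma_r(y) = \bigl( r(x)\, r(y) + r(x) \leftharpoonup y + x \triangleright r(y),\; r(x) \rightharpoonup y + x \triangleleft r(y) + x\, y \bigr).
\end{equation*}
This product lies in $\widetilde{X}_r$ precisely when its $A$-component equals $r$ applied to its $X$-component, and a routine rearrangement identifies that closure condition with the deformation axiom \equref{factAlg}. Hence $\widetilde{X}_r$ is a subalgebra of $E$, and transporting its multiplication along the linear bijection $\sigma_r : X \to \widetilde{X}_r$ yields precisely the operation $\cdot_r$ of \equref{rAlgdef}, which is thereby automatically associative.

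For part $(2)$, I would take any $A$-complement $\overline{X}$ and use the two decompositions $E = A \oplus X = A \oplus \overline{X}$ to see that $\pi_X |_{\overline{X}} : \overline{X} \to X$ is a linear isomorphism. Letting $\sigma : X \to \overline{X}$ denote its inverse and writing $\sigma(x) = r(x) + x$ defines a linear map $r : X \to A$. Computing $\sigma(x)\, \sigma(y)$ inside $E$ and splitting the two cross terms via the canonical identities $r(x)\, y = r(x) \leftharpoonup y + r(x) \rightharpoonup y$ and $x\, r(y) = x \triangleright r(y) + x \triangleleft r(y)$ from \equref{mpcanonic} produces an element whose $X$-component is $x \cdot_r y$. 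Forcing the whole product to lie in $\overline{X}$ equates its $A$-component with $r(x \cdot_r y)$, which is exactly \equref{factAlg}; the same equality shows that $\sigma(x)\, \sigma(y) = \sigma(x \cdot_r y)$, so $\sigma : X_r \to \overline{X}$ is the desired algebra isomorphism.

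The only substantive point in either direction is the recognition that the subalgebra condition for the graph-like set $\widetilde{X}_r$ inside $E$ is exactly the deformation identity \equref{factAlg}; everything else is bookkeeping about the $A \oplus X$ decomposition and the linearity of $\sigma$ (which is automatic since $\sigma$ is the inverse of a linear isomorphism). I do not expect to need any further direct appeal to the matched pair axioms $(\mathrm{MP}1)$--$(\mathrm{MP}6)$, as these are already encoded in the associativity of $E \cong A \bowtie X$.
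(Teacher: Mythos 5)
Your proposal is correct and follows essentially the same route as the paper: realizing $\widetilde{X}_r$ as the image of the section $x \mapsto (r(x),x)$ in $A \bowtie X$, identifying its closure under multiplication with \equref{factAlg}, and transporting the product back to $X$; for part $(2)$ the paper likewise inverts the projection onto $X$ (writing the inverse as $x \mapsto (-u(x),x)$ and setting $r=-u$, a purely notational difference from your $\sigma(x)=r(x)+x$). No gaps.
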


\begin{proof}
$1)$ The fact that the multiplication $\cdot_r$ defined by
\equref{rAlgdef} is associative follows by a long but
straightforward computation which relies on the axioms
(MP1)-(MP6). However, we present here a different and more natural
proof which will shed some light on the way we arrived at the
multiplication given by \equref{rAlgdef}. For a deformation map
$r: X \to A$, consider $f_{r}: X \to E = A \bowtie X$ to be the
$K$-linear map defined for any $x \in X$ by:
$$f_{r}(x) = (r(x),\, x)$$
We will prove that $\widetilde{X} : = {\rm Im}(f_{r})$ is an
$A$-complement of $E$. We start by showing that $\widetilde{X}$ is
a subalgebra of $A \bowtie X$. Indeed, for all $x$, $y \in X$ we
have:
\begin{eqnarray*}
\bigl(r(x), \, x\bigl) \bigl(r(y), \, y\bigl)
&\stackrel{\equref{defbicr}}{=}& \bigl(\underline{r(x)\, r(y) +
r(x)\leftharpoonup y + x \rhd r(y)}, \, r(x) \rightharpoonup y + x
\lhd r(y) + x \, y \bigl)\\
&\stackrel{\equref{factAlg}}{=}& \Bigl(r\bigl( r(x)
\rightharpoonup y + x \lhd r(y) + x \, y\bigl),\, r(x)
\rightharpoonup y + x \lhd r(y) + x \, y \Bigl)
\end{eqnarray*}
Therefore $\widetilde{X}$ is a subalgebra of $A \bowtie X$. We are
left to prove that $A \cap \widetilde{X} = \{0\}$. To this end,
consider $(a,\, x) \in A \cap \widetilde{X}$. Since in particular
we have $(a,\, x) \in \widetilde{X}$, it follows that $a = r(x)$.
As we also have $\bigl(r(x),\,x \bigl) \in A$ we obtain $x = 0$
and thus $A \cap \widetilde{X} = \{0\}$. Moreover, if $(b, \,y)
\in E = A \bowtie X$ we can write $(b, \,y) = \bigl(b - r(y), \,
0\bigl) + \bigl(r(y), \, y\bigl) \in A + \widetilde{X}$. Hence,
$\widetilde{X}$ is an $A$-complement of $E$. The proof will be
finished once we prove that $X_{r}$ and $\widetilde{X}$ are
isomorphic as algebras. Denote by $\widetilde{f}_{r}$ the linear
isomorphism from $X$ to $\widetilde{X}$ induced by $f_{r}$. We
will prove that $\widetilde{f}_{r}$ is an algebra morphism if we
consider $X$ endowed with the multiplication given by
\equref{rAlgdef}. For all $x$, $y \in X$ we have:
\begin{eqnarray*}
\widetilde{f}_{r}(x \cdot_{r} y)
&\stackrel{\equref{rAlgdef}}{=}&\widetilde{f}_{r} (x\,y + r(x)
\rightharpoonup y + x \lhd r(y))\\
&{=}& \Bigl(\underline{r\bigl( x \, y + r(x) \rightharpoonup y + x
\lhd r(y)\bigl)}, \,  x \, y + r(x) \rightharpoonup y + x \lhd
r(y)
\Bigl)\\
&\stackrel{\equref{factAlg}}{=}& \bigl(r(x)\, r(y) +
r(x)\leftharpoonup y + x \rhd r(y), \, x \, y + r(x)
\rightharpoonup y + x \lhd r(y)\bigl)\\
&{=}& \bigl(r(x), \, x\bigl) \bigl(r(y), \, y\bigl) =
\widetilde{f}_{r}(x) \widetilde{f}_{r}(y)
\end{eqnarray*}
Therefore, $X_{r}$ is an algebra and the proof is now finished.

$2)$ Let $\overline{X}$ be an arbitrary $A$-complement of $E$.
Since $E = A \oplus X = A \oplus \overline{X}$ we can find four
$K$-linear maps:
$$
u: X \to A, \quad v: X \to \overline{X}, \quad t:\overline{X} \to
A, \quad w: \overline{X} \to X
$$
such that for all $x \in X$ and $y \in \overline{X}$ we have:
\begin{equation} \eqlabel{lie111}
x = u(x) \oplus v(x), \qquad y = t(y) \oplus w(y)
\end{equation}
By an easy computation it follows that $v: X \to \overline{X}$ is
a linear isomorphism of vector spaces. We denote by $\tilde{v}: X
\to A \bowtie X$ the composition $\tilde{v} := i \circ v$ where
$i: \overline{X} \to E = A \bowtie X$ is the canonical inclusion.
Therefore, we have $\tilde{v}(x) \stackrel{\equref{lie111}}{=}
\bigl(-u(x),\, x\bigl)$, for all $x \in X$. In what follows we
will prove that $r := - u$ is a deformation map and $\overline{X}
\cong X_{r}$. Indeed, $\overline{X} = {\rm Im} (v) = {\rm Im}
(\tilde{v})$ is a subalgebra of $E = A \bowtie X$ and we have:
\begin{eqnarray*}
\bigl(r(x), \, x\bigl) \bigl(r(y), \, y\bigl)
&\stackrel{\equref{defbicr}}{=}& \bigl(r(x)\, r(y) +
r(x)\leftharpoonup y + x \rhd r(y), \, r(x) \rightharpoonup y + x
\lhd r(y) + x \, y \bigl)\\ &{=}& \bigl(r(z),\, z\bigl)
\end{eqnarray*}
for some $z \in X$. Thus, we obtain:
\begin{equation}\eqlabel{lie113}
r(z) = r(x) \, r(y) + r(x) \leftharpoonup y + x \triangleright
r(y), \qquad z = r(x) \rightharpoonup y + x \triangleleft r(y) + x
\, y
\end{equation}

By applying $r$ to the second part of \equref{lie113} it follows
that $r$ is a deformation map of the matched pair $(A, X,
\triangleright, \triangleleft, \leftharpoonup, \rightharpoonup)$.
Furthermore, \equref{rAlgdef} and \equref{lie113} show that $v:
X_{r} \to \overline{X}$ is also an algebra map. The proof is now
finished.
\end{proof}

\begin{remark}
We should point out that in the context of associative algebras
there exists another type of deformation in the literature, not
related to the one we introduce in \thref{descriereCompAlg} (see
for instance \cite{Fial0}).
\end{remark}

As we will see in \exref{3.7}, different deformation maps can give
rise to isomorphic deformations. Therefore, in order to classify
all complements we introduce the following:

\begin{definition}\delabel{equivAlg}
Let $(A, X, \triangleright, \triangleleft, \leftharpoonup,
\rightharpoonup)$ be a matched pair of algebras. Two deformation
maps $r$, $R: X \to A$ are called \emph{equivalent} and we denote
this by $r \sim R$ if there exists $\sigma: X \to X$ a $K$-linear
automorphism of $X$ such that for any $x$, $y\in X$ we have:
\begin{equation*}\eqlabel{equivAlgmaps}
\sigma (x \, y) - \sigma(x) \, \sigma(y) = \sigma(x) \triangleleft
R \bigl(\sigma(y)\bigl) + R \bigl(\sigma(x)\bigl) \rightharpoonup
\sigma(y) - \sigma\bigl(x \triangleleft r(y)\bigl) - \sigma
\bigl(r(x) \rightharpoonup y\bigl)
\end{equation*}
\end{definition}

The classification of complements now follows:

\begin{theorem}\thlabel{clasformelorAlg}
Let $A$ be a subalgebra of $E$, $X$ an $A$-complement of $E$ and
$(A, X, \triangleright, \triangleleft, \leftharpoonup,
\rightharpoonup)$ the associated canonical matched pair. Then
$\sim$ is an equivalence relation on the set $ {\mathcal
D}{\mathcal M} \, ( X, A \, | \, ( \triangleright, \triangleleft,
\leftharpoonup, \rightharpoonup ) )$ and the map
$$
{\mathcal H}{\mathcal A}^{2} (X, A \, | \, (\triangleright,
\triangleleft, \leftharpoonup, \, \rightharpoonup) ) \, := \,
{\mathcal D}{\mathcal M} \, (X, A \, | \, (\triangleright,
\triangleleft, \leftharpoonup, \rightharpoonup) )/ \sim \,
\longrightarrow {\mathcal F} (A, \, E), \qquad \overline{r}
\mapsto X_{r}
$$
is a bijection between ${\mathcal H}{\mathcal A}^{2} (X, A \, | \,
(\triangleright, \triangleleft, \leftharpoonup, \rightharpoonup)
)$ and the isomorphism classes of all $A$-complements of $E$. In
particular, the factorization index of $A$ in $E$ is computed by
the formula:
$$
[E : A]^f = | {\mathcal H}{\mathcal A}^{2} (X, A \, | \,
(\triangleright, \triangleleft, \leftharpoonup, \rightharpoonup)
)|
$$
\end{theorem}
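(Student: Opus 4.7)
The plan is to identify the equivalence relation of \deref{equivAlg} with the existence of an algebra isomorphism between the deformed algebras $X_r$ and $X_R$: I will show that $r \sim R$ via $\sigma$ if and only if $\sigma : X_r \to X_R$ is an isomorphism of associative algebras. Indeed, collecting on one side the terms in which $\sigma$ is applied to a sum of three products, the compatibility of \deref{equivAlg} rearranges to
\begin{equation*}
\sigma\bigl(x\,y + r(x) \rightharpoonup y + x \triangleleft r(y)\bigr) = \sigma(x)\,\sigma(y) + R\bigl(\sigma(x)\bigr) \rightharpoonup \sigma(y) + \sigma(x) \triangleleft R\bigl(\sigma(y)\bigr).
\end{equation*}
By \equref{rAlgdef}, the left-hand side is $\sigma(x \cdot_r y)$ and the right-hand side is $\sigma(x) \cdot_R \sigma(y)$; this is precisely the statement that $\sigma$ is an algebra morphism from $(X, \cdot_r)$ to $(X, \cdot_R)$.

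With this reformulation in hand, the three equivalence relation axioms are immediate: reflexivity follows from taking $\sigma$ to be the identity map of $X$, symmetry from inverting $\sigma$ (the inverse of an algebra isomorphism is again an algebra isomorphism), and transitivity from composing two such isomorphisms. For the claimed bijection, surjectivity of the assignment $\overline{r} \mapsto X_r$ onto the isomorphism classes of $A$-complements is precisely the content of \thref{descriereCompAlg}$(2)$, which guarantees that every $A$-complement of $E$ is algebra-isomorphic to some $X_r$. For injectivity, suppose $X_r$ and $X_R$ are isomorphic as algebras; since both have underlying vector space $X$, any algebra isomorphism between them is a $K$-linear automorphism of $X$ that, by the reformulation above, witnesses $r \sim R$. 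The formula $[E:A]^f = |{\mathcal H}{\mathcal A}^{2}(X, A \, | \, (\triangleright, \triangleleft, \leftharpoonup, \rightharpoonup))|$ is then a direct consequence of the definition of the factorization index as the cardinality of the set of isomorphism classes of $A$-complements.

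The only computational step is the rearrangement displayed above, which is entirely mechanical; all the substantive work was already carried out in \thref{descriereCompAlg}, bridging between deformation maps and $A$-complements. No essential obstacle remains, beyond being careful that the isomorphism condition for $\sigma$ is formulated with respect to the correct deformed multiplication on each side (i.e.\ $\cdot_r$ on the source and $\cdot_R$ on the target, which is exactly the asymmetry visible in \deref{equivAlg}).
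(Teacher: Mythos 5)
Your proof is correct and follows essentially the same route as the paper's (very terse) argument: the paper simply asserts that $r \sim R$ if and only if $X_r \cong X_R$ and invokes \thref{descriereCompAlg}, while you spell out the rearrangement showing that the condition in \deref{equivAlg} is precisely the statement that $\sigma : X_r \to X_R$ is an algebra morphism. The verification of the displayed identity is accurate, so nothing is missing.
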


\begin{proof}
Two deformation maps $r$ and $R$ are equivalent in the sense of
\deref{equivAlg} if and only if the corresponding algebras $X_r$
and $X_R$ are isomorphic. The conclusion follows by
\thref{descriereCompAlg}.
\end{proof}

We end the paper with a few examples illustrating our theory:

\begin{examples}\exlabel{3.7}
$1)$ Let $A$ be a two-sided ideal of $E$. Then $[E : A]^f \leq 1$.
Indeed, if an $A$-complement exists then it should be isomorphic
to the factor algebra $E/A$. Therefore, the factorization index is
at most $1$.

$2)$ Let $A = \begin{pmatrix} 0 & 0 \\
K & 0\end{pmatrix}$ be a subalgebra of $E = \mathcal{M}_{2}(K)$.
In this case $[E : A]^f= 2$. Indeed, using \exref{matrix} $1)$, it
follows that
$X = \begin{pmatrix} K & K \\
0 & K \end{pmatrix}$ is an $A$-complement of $E$. The non-zero
values of the canonical matched pair are given by:
\begin{eqnarray*}
e_{21} \, \leftharpoonup \, e_{11} = e_{21}, \quad e_{21} \,
\rightharpoonup \, e_{12} = e_{22}\\
e_{12} \, \triangleleft \, e_{21} = e_{11}, \quad e_{22} \,
\triangleright \, e_{21} = e_{21}
\end{eqnarray*}
By a straightforward computation it can be seen that the
associated deformation maps are as follows:
\begin{eqnarray*}
r_{a}(e_{11}) = a \, e_{21}, \qquad r_{a}(e_{12}) = a^{2} \,
e_{21}, \qquad r_{a}(e_{22}) = - a \, e_{21}, \qquad a \in K
\end{eqnarray*}
The multiplication on the $r_{a}$-deformation of $X$ is described
below:
\begin{center}
\begin{tabular} {c | c  c  c  }
$\cdot_{r_{a}}$ & $e_{11}$ & $e_{12}$ & $e_{22}$\\
\hline
$e_{11}$ & $e_{11}$ & $e_{12}+ a\,e_{22}$ & 0 \\
$e_{12}$ & $a \, e_{11}$ & $a^{2} \,e_{22}$ & $e_{12}$  \\
$e_{22}$ & 0 & $- a\, e_{22}$ & $e_{22}$ \\
\end{tabular}
\end{center}
If $a = 0$ then $r_{0}$ is the trivial map and $X_{r_{0}} = X$. On
the other hand, for any $a \in K^{*}$ we have an isomorphism of
algebras $X_{r_{a}}$ and $X_{r_{1}}$ given as follows:
\begin{eqnarray*}
\varphi: X_{r_{a}} \to X_{r_{1}}, \qquad \varphi(e_{11}) = e_{11},
\quad \varphi\left(a^{-1} \, e_{12} \right) = e_{12}, \quad
\varphi(e_{22}) = e_{22}
\end{eqnarray*}
To end with, it can be easily seen that $X_{r_{1}}$ is not
isomorphic to $X$. Indeed, it is enough to observe that $e_{11} +
e_{22}$ is a unit for $X$ while $X_{r_{1}}$ is not unital.
Therefore the factorization index is equal to $2$.

$3)$ Let $M \in\, _{K}\mathcal{M}_{K}$, and consider $A = \begin{pmatrix} K & 0 \\
0 & K\end{pmatrix}$ a subalgebra of $E = \begin{pmatrix} K & M \\
0 & K\end{pmatrix}$. According to \exref{matrix} $3)$ for $R = S
:= K$ we obtain that $X = \begin{pmatrix} 0 & M \\
0 & 0\end{pmatrix}$ is an $A$-complement of $E$. Then any
deformation map $r:
\begin{pmatrix} 0 & M \\ 0 & 0
\end{pmatrix} \to \begin{pmatrix} K & 0 \\ 0 & K \end{pmatrix} $ is uniquely implemented by two $K$-linear maps
$\alpha$, $\beta: M \to K$ such that $r
\begin{pmatrix} 0 & m \\ 0 & 0 \end{pmatrix} = \begin{pmatrix} \alpha(m) & 0 \\ 0 & \beta(m)
\end{pmatrix}$. It can be easily seen that $r$ satisfies
\equref{factAlg} if and only if $\alpha(m) \beta(n) = 0$ for all
$m$, $n \in M$. Therefore, we either have $\alpha(m) = 0$ for all
$m \in M$ or $\beta(m) = 0$ for all $m \in M$. If $\alpha(m) = 0$
for all $m \in M$ we obtain a deformation map $r_{\beta}$ defined
by $r_{\beta} \begin{pmatrix} 0 & m \\ 0 & 0 \end{pmatrix} =
\begin{pmatrix} 0 & 0 \\ 0 & \beta(m) \end{pmatrix} $ for all $m
\in M$, where $\beta : M \to K$ is an arbitrary $K$-linear map.
The multiplication induced by this deformation map is given as
follows:
$$
\begin{pmatrix} 0 & m \\ 0 & 0 \end{pmatrix} \cdot_{r_{\beta}}  \begin{pmatrix} 0 & n \\ 0 & 0
\end{pmatrix} = \begin{pmatrix} 0 & m \beta(n) \\ 0 & 0 \end{pmatrix}
$$
On the other hand, if $\beta(m) = 0$ for all $m \in M$ we obtain a
deformation map $r^{\alpha}$ defined by $r^{\alpha} \begin{pmatrix} 0 & m \\
0 & 0
\end{pmatrix} = \begin{pmatrix} \alpha(m) & 0 \\ 0 & 0
\end{pmatrix} $ for all $m \in M$, where $\alpha : M \to K$ is an
arbitrary $K$-linear map. The multiplication induced by this
deformation map is given as follows:
$$
\begin{pmatrix} 0 & m \\ 0 & 0 \end{pmatrix} \cdot_{r^{\alpha}}  \begin{pmatrix} 0 & n \\ 0 & 0
\end{pmatrix} = \begin{pmatrix} 0 & \alpha(m) n \\ 0 & 0 \end{pmatrix}
$$
By a straightforward computation it can be proved that if the
$K$-linear maps $\alpha$, $\alpha ' : M \to K$ are different from
the trivial map then $r^{\alpha}$ is equivalent in the sense of
\deref{equivAlg} to $r^{\alpha '}$. In the same manner, if the
$K$-linear maps $\beta$, $\beta ' : M \to K$ are different from
the trivial map then $r_{\beta}$ is equivalent in the sense of
\deref{equivAlg} to $r_{\beta '}$. Finally, $r^{\alpha}$ is never
equivalent to $r_{\beta}$ in the sense of \deref{equivAlg}, except
for the case when both $\alpha$ and $\beta$ are equal to the
trivial map. Therefore the factorization index $[E : A]^f= 3$.

$4)$ Let $A = \begin{pmatrix} K & K & K \\
K & K & K \\ 0 & 0 & 0 \end{pmatrix}$ be a subalgebra of $E =
\mathcal{M}_{3}(K)$. Then, by \exref{matrix} $2)$, it follows that
$X = \begin{pmatrix} 0 & 0 & 0 \\ 0 & 0 & 0 \\ K & K & K
\end{pmatrix}$ is an $A$-complement of $E$. The non-zero
values of the canonical matched pair are given as follows:
\begin{eqnarray*}
e_{31} \triangleleft e_{11} = e_{31}, \quad e_{31} \triangleleft
e_{12} = e_{32}, \quad e_{31} \triangleleft e_{13} = e_{33}\\
e_{32} \triangleleft e_{21} = e_{31}, \quad e_{32} \triangleleft
e_{22} = e_{32}, \quad e_{32} \triangleleft e_{23} = e_{33}\\
e_{13} \leftharpoonup e_{31} = e_{11}, \quad e_{13} \leftharpoonup
e_{32} = e_{12}, \quad e_{13} \leftharpoonup e_{33} = e_{13}\\
e_{23} \leftharpoonup e_{31} = e_{21}, \quad e_{23} \leftharpoonup
e_{32} = e_{22}, \quad e_{23} \leftharpoonup e_{33} = e_{23}
\end{eqnarray*}
In this case the computational complexity increases dramatically,
making it very difficult to compute all associated deformation
maps. However, we are still able to check by a straightforward
computation that the following maps are deformations of the above
canonical matched pair:
\begin{eqnarray*}
r_{1}: X \to A,&&  r_{1}(e_{33}) = e_{22}\\
r_{2}: X \to A,&&  r_{2}(e_{33}) = e_{11} + e_{22}\\
r_{3}: X \to A,&&  r_{3}(e_{31}) = e_{12}, \quad r_{3}(e_{33})=
e_{11} + e_{22}
\end{eqnarray*}
We denote by $X_{i}$ the $r_{i}$-deformation of $X$, for all $i =
\overline{1,3}$. The multiplication tables of the $X_{i}$'s, $i =
\overline{1,3}$, are depicted below:
\begin{eqnarray*}
X_{1}:&& e_{33} e_{31} =
e_{31},\,\,\, e_{33} e_{32} = e_{32} e_{33} = e_{32},\,\,\, e_{33} e_{33} = e_{33}\\
X_{2}:&& e_{31} e_{33} = e_{33} e_{31} =
e_{31}, \,\,\, e_{32} e_{33} = e_{33} e_{32} = e_{32},\,\,\, e_{33} e_{33} = e_{33}\\
X_{3}:&& e_{31} e_{31} = e_{32}, \,\,\, e_{31} e_{33} = e_{33}
e_{31} = e_{31}, \,\,\, e_{32} e_{33} = e_{33} e_{32} =
e_{32},\,\,\, e_{33} e_{33} = e_{33}
\end{eqnarray*}
It can be easily seen that the $X_{i}$'s, $i = \overline{1,3}$,
are isomorphic to the algebras A$s_{3}^{9}$, A$s_{3}^{10}$ and
respectively A$s_{3}^{12}$ listed in \cite{RRB} (for a complete
list of $3$-dimensional associative algebras over $\CC$ we refer
to \cite{Fial}). In each case the isomorphism sends $e_{31}$ to
$e_{1}$, $e_{32}$ to $e_{2}$ and $e_{33}$ to $e_{3}$, where
according to the notations of \cite{RRB}, $\{e_{1}, \, e_{2}, \,
e_{3}\}$ is a $K$-basis for the $3$-dimensional algebras mentioned
above. In particular, we obtain that the $X_{i}$'s are two by two
non-isomorphic. Moreover, none of the three algebras listed above
is isomorphic to $X$. Indeed, to start with, we should notice that
$X$ is not commutative and therefore it cannot be isomorphic to
the commutative algebras $X_{2}$ or $X_{3}$. We prove now that $X$
is not isomorphic to $X_{1}$. Assume that $\varphi: X_{1} \to X$
is an isomorphism of algebras given by:
\begin{eqnarray*}
\varphi(e_{31}) = \Sigma_{i=1}^{3} a_{i} e_{3i}, \quad
\varphi(e_{32}) = \Sigma_{i=1}^{3} b_{i} e_{3i},
\quad\varphi(e_{33}) = \Sigma_{i=1}^{3} c_{i} e_{3i}
\end{eqnarray*}
where $a_{i}$, $b_{i}$, $c_{i} \in K$ for all $i =
\overline{1,3}$. Since $\varphi(e_{31}) = \varphi(e_{33} e_{31}) =
\varphi(e_{33}) \varphi(e_{31}) = c_{3} \varphi(e_{31})$ and
$\varphi(e_{31}) \neq 0$ (as $\varphi$ is an isomorphism) we
obtain that $c_{3} = 1$. Moreover, from $0 = \varphi(e_{32}
e_{31}) = \varphi(e_{32}) \varphi(e_{31}) = b_{3} \varphi(e_{31})$
it follows that $b_{3} = 0$. Finally, as $\varphi(e_{32}) =
\varphi(e_{32} e_{33}) = \varphi(e_{32}) \varphi(e_{33}) = b_{3}
\varphi(e_{33}) = 0$ we have reached a contradiction. Therefore
$X_{1}$ is not isomorphic to $X$ and the factorization index $[E :
A]^f \geq 4$.
\end{examples}

\begin{remark}
After a careful analysis of \exref{3.7} we can easily conclude
that the deformations of a given algebra $X$ do not necessarily
preserve the properties of $X$. For instance, in \exref{3.7}, $2)$
we obtained a non-unital algebra as a deformation of a unital one,
while in \exref{3.7}, $4)$ we construct commutative deformations
of a non-commutative algebra. This is not the case for the
classical deformations studied in \cite{Fial0} as it is well known
that finite dimensional unital algebras only deform to unital
algebras.
\end{remark}

\end{document}